\begin{document}
\newtheorem{theorem}{Theorem}[section]
\newtheorem{lemma}[theorem]{Lemma}
\newtheorem{remark}[theorem]{Remark}
\newtheorem{definition}[theorem]{Definition}
\newtheorem{corollary}[theorem]{Corollary}
\newtheorem{example}[theorem]{Example}
\def\qedbox{\hbox{$\rlap{$\sqcap$}\sqcup$}}
\makeatletter
  \renewcommand{\theequation}{%
   \thesection.\alph{equation}}
  \@addtoreset{equation}{section}
 \makeatother
\title[Sectional curvature and the Spectral Theorem]
{Sharp sectional curvature bounds and a new proof of the Spectral Theorem}
\author{Maxine Calle and Corey Dunn}

\begin{address}{CD: California State University at San Bernardino,
San Bernardino, CA 92407, USA. Email: \it
cmdunn@csusb.edu.}\end{address}
\begin{address}{MC: Reed College, Portland, OR 97202, USA. Email: \it callem@reed.edu. 
}\end{address}
\begin{abstract}

We algebraically compute all possible sectional curvature values for canonical algebraic curvature tensors, and use this result to give a method for constructing general sectional curvature bounds.  We use a well-known method to geometrically realize these results to produce a  hypersurface with prescribed sectional curvatures at a point.  By extending our methods, we give a relatively short proof of the Spectral Theorem for self-adjoint operators on a finite dimensional real vector space.

\end{abstract}
\keywords{sectional curvature, canonical algebraic curvature tensor, spectral theorem. \newline
2010 {\it Mathematics Subject Classification.} Primary: 15A69, Secondary: 15A63, 53C21} \maketitle

\section{Introduction}  Let $V$ be a real vector space of finite dimension $n$, and let $V^* = {\rm Hom}(V, \mathbb{R})$ be the corresponding dual space.  An \emph{algebraic curvature tensor} $R \in \otimes^4 V^*$ satisfies the properties below for all $x, y, z, w \in V$:
\begin{equation}  \label{act}
\begin{array}{c}
R(x, y, z, w) = -R(y, x, z, w) = R(z, w, x, y),  {\rm\ and\ }\\
R(x, y, z, w) + R(x, w, y, z) + R(x, z, w, y) = 0.
\end{array}
\end{equation}
Let $\mathcal{A}(V)$ denote the set of all algebraic curvature tensors.  Let $S^2(V^*)$ denote the space of all symmetric bilinear forms over $V$, and let $\varphi \in S^2(V^*)$.  Define a \emph{canonical algebraic curvature tensor} $R_\varphi \in \mathcal{A}(V)$ as
$$
R_\varphi(x, y, z, w) = \varphi(x, w) \varphi(y, z) - \varphi(x, z) \varphi(y, w).
$$
If the vector space $V$ is endowed with a non-degenerate inner product $\langle\cdot,\cdot\rangle$, then the triple $(V, \langle\cdot, \cdot \rangle, R)$ is referred to as a \emph{model space}.  In this research, all inner products are assumed to be positive definite.  Throughout this work, we consider the vector space $V$ and a positive definite inner product $\langle\cdot, \cdot \rangle$ to be given, and so we do not always specifically reference them.  For the sake of convenience, we will refer to properties of a model space (such as sectional curvature) as a property of an algebraic curvature tensor when there is no possibility of confusion.

Let $Gr_2(V)$ be the Grassmannian of $2-$planes in $V$.  This space is compact and connected \cite{MS74}.  Given a model space $(V, \langle\cdot, \cdot \rangle, R)$ and a $2-$plane $\pi \in V$, define the \emph{sectional curvature} $\kappa(\pi)$ of $\pi$ to be 
$$
\frac{R(x, y, y, x)}{\langle x, x \rangle \langle y, y\rangle - \langle x, y \rangle^2},
$$
where $\pi = {\rm span}\{x, y\}$.  This quantity is independent of the basis chosen for $\pi$.

Interest in algebraic curvature tensors stems from basic results in differential geometry.  If $(M,g)$ is a smooth manifold  one may compute the Riemann curvature tensor $R_P$  at the point $P \in M$ using the Levi-Civita connection.  It is well known that $R_P$ satisfies the identities listed in Equation (\ref{act}), and thus $R_P$ is an algebraic curvature tensor on the tangent space $T_PM$.  Using the metric $g_P$ restricted to $T_PM$, $(T_PM, g_P, R_P)$ is a model space.    The converse is also true \cite{G07}:  given a model space $(V, \langle \cdot, \cdot\rangle, R)$, there exists a smooth manifold $(M,g)$,  a point $P \in M$, and a vector space isometry $\Psi: V \to T_PM$ so that $\Psi^*R_P = R$.  The process of finding such a manifold is generally referred to as a geometric realization.

A general line of questioning is to explore exactly what one can say about a manifold that is a geometric realization of any given model space(s).  In this way, the algebraic properties of the model space can influence the geometry of a geometric realization of it.  Although there are many examples of this \cite{BGN12, GKV02, G-S, KP94, T-V, TV86,  T05}, we give two examples relevant to our study.  The first example concerns the property of constant sectional curvature: up to local isometry, there is only one way to construct a manifold such that the model space at any of its points has constant sectional curvature.

 The second example begins with the fact  \cite{G07} that $\{R_\varphi | \varphi \in S^2(V^*)\}$ is spanning set for the space of algebraic curvature tensors.  For $R \in \mathcal{A}(V)$, denote 
 $$
 \nu(R) = \min\{k | R = \sum_{i = 1}^k \alpha_i R_{\varphi_i}\}, \quad \nu(n) = \max\{\nu(R) | R \in \mathcal{A}(V)\}.
 $$
 This notation seems to have originated in \cite{G07} and has since been studied by several authors.  The authors in \cite{DG04} show that $\nu(3) = 2$, and use the Nath Embedding Theorem \cite{N56} to prove $\nu(n) \leq \frac{n(n+1)}{2}$.  In this way, $\nu(R)$ functions as a lower bound of the codimension of any local embedding of a given manifold, where $R$ is the algebraic curvature tensor at a given point.  For this reason,  subsequent work \cite{DD10} has aimed to discover linear dependencies in the set of canonical algebraic curvature tensors.  
 
We have two major goals for this research.  First, we establish sharp bounds on the sectional curvature values of any canonical algebraic curvature tensor.  We interpret these bounds through a geometric realization result, and give a method for constructing bounds on the sectional curvature values of any algebraic curvature tensor.  By an extension of our methods (inspired by R. Klinger \cite{K91}), we meet our second goal: to give a short and self-contained proof of the Spectral Theorem. 

More specifically, after some preliminary comments, in  Section 2 we establish the following:

 \begin{theorem}  \label{sec}
Let $\varphi \in S^2(V^*)$, and let $\lambda_1, \ldots, \lambda_n$ be the eigenvalues of $\varphi$, repeated according to multiplicity.  Let $m$ and $M$, respectively, be the minimum and maximum of the set  $\{\lambda_i\lambda_j | i \neq j\}$. The set of sectional curvatures of $R_\varphi$ is precisely the interval $[m, M]$.
\end{theorem}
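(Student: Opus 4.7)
The plan is to diagonalize $\varphi$, compute the sectional curvature in the resulting orthonormal frame to show that $\kappa(\pi)$ is always a \emph{convex combination} of the products $\lambda_i\lambda_j$ (via Lagrange's identity), and then invoke the connectedness of $Gr_2(V)$ to upgrade the inclusion to an equality.

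First, since $\varphi$ is symmetric and we have a positive-definite inner product, we may choose an orthonormal basis $\{e_1,\dots,e_n\}$ of $V$ in which $\varphi$ is diagonal with diagonal entries $\lambda_1,\dots,\lambda_n$.  (I am taking this diagonalization as given; in the authors' larger project it will of course be reproved later.)  Given a $2$-plane $\pi$, since $\kappa$ is basis-independent I pick an orthonormal basis $\{x,y\}$ of $\pi$ and write $x=\sum a_i e_i$, $y=\sum b_i e_i$, so that $\sum a_i^2=\sum b_i^2=1$ and $\sum a_i b_i=0$.  Then the denominator in the definition of $\kappa(\pi)$ equals $1$, and a direct expansion of the numerator
$$\varphi(x,x)\varphi(y,y)-\varphi(x,y)^2=\Big(\sum_i\lambda_i a_i^2\Big)\Big(\sum_j\lambda_j b_j^2\Big)-\Big(\sum_i\lambda_i a_i b_i\Big)^2$$
collapses, after collecting terms and observing that the diagonal $i=j$ contributions cancel, to the clean expression
$$\kappa(\pi)=\sum_{i<j}\lambda_i\lambda_j\,(a_ib_j-a_jb_i)^2.$$

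The key observation is that the classical Lagrange identity yields
$$\sum_{i<j}(a_ib_j-a_jb_i)^2=\langle x,x\rangle\langle y,y\rangle-\langle x,y\rangle^2=1,$$
so the coefficients $(a_ib_j-a_jb_i)^2$ are nonnegative and sum to $1$.  Thus $\kappa(\pi)$ is a convex combination of the values $\lambda_i\lambda_j$ with $i\neq j$, and therefore $\kappa(\pi)\in[m,M]$.  In particular, the set of sectional curvatures of $R_\varphi$ is contained in $[m,M]$.

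For the reverse inclusion, note that the plane $\pi=\mathrm{span}\{e_i,e_j\}$ with $i\neq j$ has $\kappa(\pi)=\lambda_i\lambda_j$, so both the minimum $m$ and the maximum $M$ are actually attained.  Since $Gr_2(V)$ is connected (as quoted from \cite{MS74} in the introduction) and $\kappa\colon Gr_2(V)\to\mathbb{R}$ is continuous, its image is a connected subset of $\mathbb{R}$ containing $m$ and $M$, hence contains $[m,M]$.  Combined with the upper bound above, this gives equality.  I do not anticipate a serious obstacle; the only step requiring care is the algebraic identity collapsing $\kappa(\pi)$ into the Lagrange form, and the only implicit assumption is $n\geq 2$ (otherwise $Gr_2(V)$ is empty and the statement is vacuous).
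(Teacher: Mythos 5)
Your proof is correct, and the Lagrange-identity calculation is a clean and well-known way to see that $\kappa(\pi)=\sum_{i<j}\lambda_i\lambda_j(a_ib_j-a_jb_i)^2$ is a convex combination of the off-diagonal products $\lambda_i\lambda_j$. But it takes a genuinely different route from the paper, and the difference matters for the paper's larger plan. You begin by diagonalizing $\varphi$ with respect to an orthonormal basis, which is precisely the Spectral Theorem --- the very result the authors prove in Section~3 using the machinery built here. The paper's proof of Theorem~\ref{sec} instead goes through Lemma~\ref{2space}, a variational argument (adapted from Klinger): take an extremal $2$-plane $\pi$, diagonalize $\varphi|_\pi$ by an explicit $2\times 2$ rotation, and then differentiate $\kappa$ along one-parameter families $\pi_\theta$ rotating toward each $f_j$ with $j\geq 3$ to conclude $\varphi(f_1,f_j)=\varphi(f_2,f_j)=0$. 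This shows extremal planes are spanned by eigenvectors without assuming diagonalizability in advance, after which the extremal curvature values are products of eigenvalues and connectedness of $Gr_2(V)$ fills in the interval, just as in your argument. The trade-off: your approach is shorter, gives the stronger pointwise identity expressing $\kappa(\pi)$ as a convex combination, and makes both the containment and the attainment of $m$ and $M$ transparent; the paper's approach is more laborious but is logically independent of the Spectral Theorem, which is essential because Lemma~\ref{2space} is reused in Section~3 to give the promised new proof of that theorem. As a standalone proof of Theorem~\ref{sec} yours is perfectly valid --- you even flag the dependence honestly --- but it could not be substituted into this paper without collapsing Section~3 into a circular argument.
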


We use this result to prove two corollaries.  The first corollary (Corollary \ref{cor1}) uses a well-known result to  geometrically realize any interval as the set of sectional curvatures of a manifold at a point.  The manifolds we produce are hypersurfaces in Euclidean space.  The second corollary (Corollary \ref{cor2}) provides bounds (which are not sharp, see Remark \ref{notsharp}) on the set of sectional curvatures of an arbitrary algebraic curvature tensor $R$ in terms of $\nu(R)$ and the results from Theorem \ref{sec}.

In Section 3 we consider a canonical algebraic curvature tensor to provide a short and self-contained proof of the Spectral Theorem:

\begin{theorem}[The Spectral Theorem]  \label{spec}
Let $V$ be an inner product space.  If $\varphi \in S^2(V^*)$, then there exists an orthonormal basis $\{f_1, \ldots, f_n\}$ for $V$ for which $\varphi(f_i, f_j) = \lambda_{i} \delta_{ij}$.
\end{theorem}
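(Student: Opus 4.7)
The plan is to induct on $n = \dim V$, reducing the theorem to the production of a single unit eigenvector $f_n$ of $\varphi$ (that is, a unit vector satisfying $\varphi(f_n, y) = \lambda \langle f_n, y\rangle$ for all $y \in V$). Once such an $f_n$ is found, $\varphi$ restricts to a symmetric bilinear form on the $(n-1)$-dimensional inner product space $W = f_n^\perp$, and the eigenvector property ensures $\varphi(w, f_n) = 0$ for all $w \in W$; the inductive hypothesis then produces an orthonormal eigenbasis of $W$, which together with $f_n$ is the desired basis of $V$. The base case $n = 1$ is trivial.

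To produce an eigenvector for $n \geq 2$, I exploit the canonical algebraic curvature tensor $R_\varphi$. Its sectional curvature $\kappa\colon Gr_2(V) \to \mathbb{R}$ is continuous on the compact space $Gr_2(V)$, so it attains a maximum $M$ at some $2$-plane $\pi_0$. Because $\varphi$ and $\tilde\varphi := \varphi + \gamma\langle\cdot,\cdot\rangle$ share eigenvectors for every $\gamma \in \mathbb{R}$ (the eigenvalues merely shift by $\gamma$), I replace $\varphi$ by $\tilde\varphi$ with $\gamma$ large enough that $M > 0$. I then diagonalize $\varphi|_{\pi_0}$ directly by an elementary $2$-dimensional argument: rotating an orthonormal basis of $\pi_0$ through angles in $[0, \pi/2]$ continuously sends the off-diagonal matrix entry to its negative, so the intermediate value theorem furnishes orthonormal $\{f_1, f_2\} \subset \pi_0$ with $\varphi(f_1, f_2) = 0$. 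Writing $a = \varphi(f_1, f_1)$ and $b = \varphi(f_2, f_2)$, the maximum value equals $M = ab > 0$, so $a$ and $b$ are both nonzero.

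If $n = 2$, then $\{f_1, f_2\}$ already diagonalizes $\varphi$. For $n \geq 3$, I extract the eigenvector from the maximality of $\pi_0$: for any unit $v \in \pi_0^\perp$, the family $\pi(t) = \mathrm{span}\{\cos t \cdot f_1 + \sin t \cdot v,\ f_2\}$ has $\kappa(\pi(t)) \leq M = \kappa(\pi(0))$, so the first-order necessary condition $\frac{d}{dt}\big|_{t=0}\kappa(\pi(t)) = 0$ applies. A short direct computation, using $\varphi(f_1, f_2) = 0$ to kill the contribution of the $\varphi(f_1(t), f_2)^2$ term, reduces this to $2b \cdot \varphi(f_1, v) = 0$; since $b \neq 0$, I conclude $\varphi(f_1, v) = 0$ for every $v$ orthogonal to $\pi_0$. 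Combined with $\varphi(f_1, f_2) = 0$, this gives $\varphi(f_1, y) = a \langle f_1, y\rangle$ for all $y \in V$, exhibiting $f_1$ as the desired unit eigenvector and closing the induction.

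I expect the main obstacle to be the degenerate case $M \leq 0$, where the conclusion $ab \neq 0$ fails and the first-order argument collapses; this is precisely what the preliminary shift $\varphi \mapsto \varphi + \gamma\langle\cdot,\cdot\rangle$ is designed to circumvent, at no cost since shifting by a multiple of the inner product preserves all eigenvectors. Apart from this, the proof rests only on compactness of $Gr_2(V)$, a one-variable calculus computation at a critical point, and the intermediate value theorem used for the $2 \times 2$ case.
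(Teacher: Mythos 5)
Your proposal is correct, and its core machinery is the same as the paper's: use the compactness of $Gr_2(V)$ to find an extremal $2$-plane $\pi_0$ for the sectional curvature of $R_\varphi$, rotate to diagonalize $\varphi|_{\pi_0}$, apply the first-order critical-point condition in directions orthogonal to $\pi_0$ to produce an eigenvector, and then induct on $\dim V$ by passing to the orthogonal complement. The one genuine divergence is how the degenerate case is handled. The paper must deal separately with the possibility $R_\varphi = 0$ (where no nonzero extremal curvature exists): its Lemma \ref{helpful} argues by contradiction that $0$ must be an eigenvalue, via a cofactor expansion identifying a nonzero $2\times 2$ minor of $\varphi$ as a nonzero component of $R_\varphi$. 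You instead sidestep the case split entirely by replacing $\varphi$ with $\tilde\varphi = \varphi + \gamma\langle\cdot,\cdot\rangle$ for $\gamma$ large; since $\kappa_{R_{\tilde\varphi}}(\pi) = \gamma^2 + \gamma\,\mathrm{tr}(\varphi|_\pi) + \kappa_{R_\varphi}(\pi)$ grows without bound in $\gamma$ for any fixed $\pi$, this forces $M > 0$, and the shift visibly preserves eigenvectors (conjugating $A$ to $A + \gamma I$ at the operator level), so no information is lost. Your version is slightly leaner: the shift collapses the paper's two-case Lemma \ref{helpful} into a single positive-maximum case, and you only need $f_1$ to be an eigenvector (not the full eigenbasis of $\pi_0$ that the paper's Lemma \ref{2space} produces). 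The paper's determinant argument, for its part, has the merit of recovering the independent fact that $R_\varphi = 0$ forces $\mathrm{rank}(\varphi) \leq 1$, which your normalization bypasses. Both are valid and roughly comparable in length.
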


\section{Sectional curvature bounds}  It is well known that given $\varphi \in S^2(V^*)$, there exists a self-adjoint linear map $A:V \to V$ characterized by the equation $\varphi(x, y) = \langle Ax, y\rangle$.  Relevant aspects of $\varphi$ are defined in terms of those same aspects in $A$, for example, $\ker(\varphi)$ and  ${\rm Rank}(\varphi)$ and defined as $\ker(A)$ and ${\rm Rank}(A)$, respectively.  

Similarly, eigenvalues and eigenvectors of a linear map $A: V \to V$ can related to the corresponding bilinear form $\varphi$.   It will be helpful to do so in reference to an orthonormal basis:  if $\{f_1, \ldots, f_n\}$ is an orthonormal basis for $V$, the $(j,i)$ matrix entry of $A$ on this basis is equal to $\varphi(f_i, f_j)$.  For this reason,   $f_i$ is an eigenvector with corresponding eigenvalue $\lambda_i$ if and only if $\varphi(f_i, f_j) = \lambda_i \delta_{ij}.$  

It is through this perspective that we establish our results.  We will repeatedly use the following lemma  whose proof uses a technique adapted from Klinger's work in \cite{K91}.

\begin{lemma}  \label{2space}
Let $\varphi \in S^2(V^*)$, and suppose $R_\varphi \neq 0$.  If $\pi$ is a 2-plane whose sectional curvature is extremal, then there exists an orthonormal basis of eigenvectors for $\pi$.
\end{lemma}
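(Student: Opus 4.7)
My plan is to express the extremality of $\kappa$ as a constrained optimization problem on the Stiefel manifold of orthonormal $2$-frames and apply Lagrange multipliers. Let $A\colon V \to V$ be the self-adjoint operator characterized by $\varphi(u, v) = \langle Au, v\rangle$. For an orthonormal basis $\{x, y\}$ of $\pi$, I would regard $\kappa(\pi) = F(x, y) := \varphi(x, x)\varphi(y, y) - \varphi(x, y)^2$ as a function subject to the constraints $\|x\|^2 = \|y\|^2 = 1$ and $\langle x, y\rangle = 0$. Computing the partial gradients gives
\[
\nabla_x F = 2\varphi(y, y)\,Ax - 2\varphi(x, y)\,Ay, \qquad \nabla_y F = 2\varphi(x, x)\,Ay - 2\varphi(x, y)\,Ax,
\]
and since the constraint gradients in the $x$-argument span $\pi$ (and similarly in the $y$-argument), the Lagrange condition forces $\nabla_x F \in \pi$ and $\nabla_y F \in \pi$. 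By linearity of $A$, this rewrites as $Ap \in \pi$ and $Aq \in \pi$, where $p := \varphi(y, y) x - \varphi(x, y) y$ and $q := \varphi(x, x) y - \varphi(x, y) x$.

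The decisive observation is that the $2\times 2$ determinant of the coordinates of $\{p, q\}$ in the basis $\{x, y\}$ is exactly $\varphi(x, x)\varphi(y, y) - \varphi(x, y)^2 = \kappa(\pi)$. Whenever $\kappa(\pi) \ne 0$, the pair $\{p, q\}$ is a basis of $\pi$, so $A(\pi) \subseteq \pi$. The operator $A|_\pi$ is then self-adjoint on the two-dimensional inner product space $\pi$, and the two-dimensional case of the spectral theorem (an elementary rotation argument in the plane) produces an orthonormal basis of eigenvectors of $A|_\pi$; by $A$-invariance of $\pi$, these are genuine eigenvectors of $A$ on $V$, as required.

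The main obstacle is the degenerate case $\kappa(\pi) = 0$, where the determinant vanishes and Lagrange multipliers alone do not force $A$-invariance of $\pi$. I would handle this by first rotating within $\pi$ to diagonalize $\varphi|_\pi$ (a purely two-dimensional step), so that $\varphi(x, y) = 0$; then $\kappa(\pi) = 0$ forces $\varphi(x, x)\varphi(y, y) = 0$, say $\varphi(y, y) = 0$. The surviving Lagrange equation $\varphi(x, x)\,Ay \in \mathbb{R}\, y$ identifies $y$ as an eigenvector whenever $\varphi(x, x) \ne 0$. To produce the second eigenvector I would carry out a second-order perturbation of $x$ in directions $v \perp \pi$; the hypothesis $R_\varphi \ne 0$ is precisely what prevents the fully degenerate collapse $\varphi|_\pi \equiv 0$ from propagating to a non-eigenvector extremum. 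Carrying this second-order analysis through uniformly across the sub-cases is the crux of the argument.
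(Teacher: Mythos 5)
Your Lagrange-multiplier argument is a correct and conceptually clean proof of the lemma under the additional hypothesis $\kappa(\pi) \neq 0$, which is the only regime in which the paper actually invokes the lemma. The paper's proof is essentially the coordinate version of yours: it first diagonalizes $\varphi|_\pi$ to produce $\{f_1, f_2\}$ with $\varphi(f_1,f_2)=0$, then rotates $f_1$ toward each $f_j$ with $j \geq 3$ and sets the $\theta$-derivative of $\kappa$ equal to zero at $\theta=0$. This yields $R_\varphi(f_1, f_2, f_2, f_j)=0$, i.e., $\varphi(f_1, f_j)\varphi(f_2,f_2)=0$, which is exactly the $f_j$-component of your Lagrange condition $Ap \in \pi$ once one uses $\varphi(f_2,f_2)\neq 0$. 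Your formulation buys the geometrically transparent intermediate statement $A(\pi) \subseteq \pi$, after which the problem collapses to a two-dimensional one; the paper gets the component-wise vanishings $\varphi(f_1,f_j)=\varphi(f_2,f_j)=0$ directly without naming the invariance. Both are genuinely first-order variational arguments exploiting extremality, so they are closer in spirit than the differing formalisms suggest.

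On the degenerate case, your instinct that something breaks is sound, but the second-order perturbation program you sketch is not worth pursuing: the lemma is in fact false for extremal $2$-planes of zero sectional curvature. Take $\varphi$ diagonal with eigenvalues $1,-1,0$ on an orthonormal basis $\{e_1,e_2,e_3\}$ of $\mathbb{R}^3$. By Theorem \ref{sec} the maximum sectional curvature is $0$, and it is attained on $\pi={\rm span}\{e_3,\, a e_1 + b e_2\}$ for any unit vector $(a,b)$; when $ab\neq 0$, the only eigenvectors of $\varphi$ lying in $\pi$ are scalar multiples of $e_3$, so no orthonormal eigenbasis of $\pi$ exists. The paper's proof tacitly restricts to the nonzero case as well: its opening sentence produces a plane of extremal \emph{nonzero} sectional curvature and works only with that plane, and every downstream application (in Theorem \ref{sec} and Lemma \ref{helpful}) also imposes $\kappa\neq 0$ before citing the lemma. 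The correct repair is to add $\kappa(\pi)\neq 0$ to the hypotheses of the lemma rather than to attempt a rescue of the degenerate case.
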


\begin{proof}
Since $R_\varphi \neq 0$, and $R_\varphi$ is determined by its sectional curvatures \cite{Lee97}, there exists a 2-plane $\pi$ of extremal nonzero sectional curvature.   Let $\{e_1, e_2\}$ be any orthonormal basis for $\pi$.  Let $\varphi|_{\pi}$ be the restriction of $\varphi$ to $\pi$. We now diagonalize\footnote{One could of course use the Spectral Theorem here, but since we use this result in our proof of the Spectral Theorem in the next section, we instead establish this directly.}  the symmetric form $\varphi|_{\pi}$ to create a new orthonormal basis $\{f_1, f_2\}$ for $\pi$.   For some $\theta$, set
$$
\begin{array}{r c l}
f_1 & = & \cos \theta e_1 - \sin \theta e_2, \\
f_2 & = & \sin\theta e_1 + \cos \theta e_2,
\end{array}
$$
where we will determine $\theta$ presently.  Let $\varphi_{ij} = \varphi(e_i, e_j)$.  We compute
$$\begin{array}{r c l}
\varphi(f_1, f_2) &=& \cos\theta \sin\theta (\varphi_{11} - \varphi_{22}) + (\cos^2\theta - \sin^2\theta) \varphi_{12} \\
 &= & \frac{1}{2}\sin(2\theta) (\varphi_{11} - \varphi_{22}) + \cos(2\theta) \varphi_{12}.
\end{array}$$
Choose $\theta$ so that $\varphi(f_1, f_2) = 0$.  Explicitly, if $\varphi_{12} = 0$ already, then $\theta = 0$.  Otherwise, 
$$
\theta = \frac{1}{2} \; {\rm arccot}\!\left(\frac{\varphi_{22} - \varphi_{11}}{2\varphi_{12}}\right).
$$
Now extend this basis for $\pi$ (in an arbitrary way) to form an orthonormal basis $\{f_1, f_2, f_3, \ldots, f_n\}$ for $V$. Note that on this basis, 
\begin{equation} \label{nonzero}
\begin{array}{r c l}
\varphi(f_1, f_2) &=& 0, {\rm\ and\ } \\
R_\varphi(f_1, f_2, f_2, f_1) &=& \varphi(f_1, f_1)\varphi(f_2, f_2)  \ {\rm is\ extremal\ and\ nonzero.}
\end{array}
\end{equation}

We now show that $\varphi(f_1, f_j) = \varphi(f_2, f_j) = 0$ for $j \geq 3$, which will complete the proof.  Choose any index $j \geq 3$, and  consider the plane 
$$\pi_\theta = {\rm span}\{\cos \theta f_1 + \sin \theta f_j, f_2\}.$$
Using a double angle formula and various curvature identities listed in the introduction, 
$$
\begin{array}{r c c}
\kappa(\pi_\theta) &=& \cos^2 \theta R_\varphi(f_1, f_2, f_2, f_1) + \sin^2 \theta R_\varphi(f_j, f_2, f_2, f_j) \\
 & & + \sin(2\theta)R_\varphi(f_1, f_2, f_2, f_j).
\end{array}$$
Since $\pi_0$ is extremal, $0$ is a critical point of the function $\theta \mapsto \kappa(\pi_\theta)$, so
$$
0 = \frac{d}{d\theta}[\kappa(\pi_\theta)]|_{\theta = 0}  =  2 R_\varphi(f_1, f_2, f_2, f_j).
$$
As a result, 
$$\begin{array}{r c l}
0 = R_\varphi(f_1, f_2, f_2, f_j) &=& \varphi(f_1, f_j)\varphi(f_2, f_2) - \varphi(f_1, f_2) \varphi(f_2, f_j) \\
& = &  \varphi(f_1, f_j)\varphi(f_2, f_2).
\end{array}$$
By Equation (\ref{nonzero}), we know $\varphi(f_2, f_2) \neq 0$, hence $\varphi(f_1, f_j) = 0$.  We can prove that $\varphi(f_2, f_j) = 0$ in a similar way by considering ${\rm span}\{f_1, \cos \theta f_2 + \sin \theta f_j\}$.
\end{proof}

We use Lemma \ref{2space} to establish Theorem \ref{sec}.  

\begin{proof}[\textbf{Proof of Theorem \ref{sec}}]  We start by proving that the maximal sectional curvature is $M$, the largest pairwise product of eigenvalues of $\varphi$.  Since $Gr_2(V)$ is compact, there exists a 2-plane $\Pi$ for which 
$$
\kappa(\Pi) = \sup\{\kappa(L) | L \in Gr_2(V)\}.
$$
We proceed in cases:  either $\kappa(\Pi) \neq 0$ or $\kappa(\Pi) = 0$.  Our goal is to show $\kappa(\Pi)$ to be a pairwise product of eigenvalues.  Thus, as the maximal sectional curvature value, $\kappa(\Pi) = M$.

If $\kappa(\Pi) \neq 0$, then we may use Lemma \ref{2space} to produce an orthonormal basis of eigenvectors $\{f_1, f_2\}$ for $\Pi$.  In this case,
$$
\kappa(\Pi) = R_\varphi(f_1, f_2, f_2, f_1) = \varphi(f_1, f_1)\varphi(f_2, f_2)
$$
is a product of eigenvalues and hence is equal to $M$.

Now suppose $\kappa(\Pi) = 0$.  Find an orthonormal basis $\{f_1, \ldots, f_n\}$ diagonalizing $\varphi$, and note  
$$
\kappa({\rm span}\{f_i, f_j\}) = R_\varphi(f_i, f_j, f_j, f_i) = \varphi(f_i, f_i)\varphi(f_j, f_j)
$$
is a product of eigenvalues of $\varphi$.  It is not possible that there are two nonzero eigenvalues of the same sign since their corresponding eigenvectors would span a 2-plane of positive sectional curvature which is contrary to our assumption that $\kappa(\Pi) = 0$ is the maximal sectional curvature.  Since $\dim(V) \geq 3$,  $0$ is an eigenvalue, and therefore $0 = \kappa(\Pi) = M$ is the largest pairwise product of eigenvalues as desired.

We now prove that the minimal sectional curvature is  $m$, the smallest pairwise product of eigenvalues of $\varphi$.  Proceeding similarly, there exists a 2-plane $\pi$ for which
$$
\kappa(\pi) = \inf\{\kappa(L) | L \in Gr_2(V)\}.
$$
We again proceed in cases:  either $\kappa(\pi) \neq 0$ or $\kappa(\pi) = 0$.  If $\kappa(\pi) \neq 0$ then use Lemma \ref{2space} once more to produce an orthonormal basis of eigenvectors  $\{f_1, f_2\}$ for $\pi$.  Then  
$$
\kappa(\pi) = R_\varphi(f_1, f_2, f_2, f_1) = \varphi(f_1, f_1) \varphi(f_2, f_2)
$$
is a product of eigenvalues and is hence equal to $m$.

Now suppose $\kappa(\pi) = 0$ and again diagonalize $\varphi|_{\pi}$ with the basis $\{f_1, f_2\}$ and extend it to an orthornormal basis $\{f_1, \ldots, f_n\}$ for $V$.  We then have
$$
0 = \kappa(\pi) = R_\varphi(f_1, f_2, f_2, f_1) = \varphi(f_1, f_1)\varphi(f_2, f_2).
$$
Thus, one of the two factors above is zero.  As exchanging the two vectors would not change the span nor disrupt the diagonalization, we may assume $\varphi(f_1, f_1) = 0$.  For $j \geq 3$, since $0$ is assumed to be the minimal sectional curvature, we find
$$\begin{array}{r c l}
0 \leq \kappa({\rm span}\{f_1, f_j\}) & =& R_\varphi(f_1, f_j, f_j, f_1)  \\
 &=& \varphi(f_1, f_1) \varphi(f_j, f_j) - \varphi(f_1, f_j)^2 \\
  & = & -\varphi(f_1, f_j)^2.
\end{array}$$
It follows that $\varphi(f_1, f_j) = 0$ for all $j \neq 1$, and so $f_1$ is an eigenvector corresponding to the eigenvalue $\varphi(f_1, f_1) = 0$.  Thus, $0$ is an eigenvalue of $\varphi$.  

To finish this portion of the proof, we prove that all other eigenvalues of $\varphi$ are either zero or have the same sign, which would show $0$ to be the smallest pairwise product of the eigenvalues.  To do this, find an orthonormal basis $\{e_1, \ldots, e_n\}$ for $V$ that diagonalizes $\varphi$.   Since 0 is minimal, we must have
$$
0 \leq \kappa({\rm span}\{e_i, e_j\}) = R_\varphi(e_i, e_j, e_j, e_i) = \varphi(e_i, e_i)\varphi(e_j, e_j).
$$
For this reason the nonzero eigenvalues of $\varphi$ cannot differ in sign, and this portion of the proof is complete.

We have proven that $\kappa(\Pi) = M$ and $\kappa(\pi) = m$ are, respectively, the maximal and minimal sectional curvatures.  Since $Gr_2(V)$ is connected and the mapping $L \mapsto \kappa(L)$ is continuous, the image of this mapping is connected as well, which completes the proof.
\end{proof}

We use a well known geometric realization result to establish the following corollary.  

\begin{corollary}  \label{cor1}
Let $[a,b]$ be any interval.  There exists a hypersurface $M$ in Euclidean space, a smooth metric $g$ on $M$, and a point $P\in M$ so that the set of sectional curvatures of $M$ at $P$ is precisely $[a,b]$.
\end{corollary}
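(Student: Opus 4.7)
I would deduce the corollary from Theorem~\ref{sec} together with the classical graph-of-a-quadratic construction for hypersurfaces in Euclidean space. The approach has two steps: (a)~produce a symmetric bilinear form $\varphi$ whose pairwise eigenvalue products achieve minimum $a$ and maximum $b$, and (b)~realize $R_\varphi$ as the Riemann tensor at a point of a hypersurface whose second fundamental form there is exactly $\varphi$.

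For the algebraic step, I would look for a short eigenvalue list. Whenever $b > 0$, the three-eigenvalue choice $\lambda_1 = \lambda_2 = \sqrt{b}$ and $\lambda_3 = a/\sqrt{b}$ gives pairwise products $\{b, a, a\}$, so the minimum is $a$ and the maximum is $b$ regardless of the sign of $a$. The boundary and degenerate intervals are handled with minor variants: $(1, a, 0)$ works when $b = 0$ and $a \leq 0$, and for $a = b$ one can take $n = 2$ with $\lambda_1 \lambda_2 = a$. Letting $\varphi$ be the diagonal symmetric bilinear form with these eigenvalues on $V = \mathbb{R}^n$ with its standard inner product, Theorem~\ref{sec} immediately identifies the set of sectional curvatures of $R_\varphi$ with $[a, b]$.

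For the geometric step, let $A$ be the self-adjoint operator on $\mathbb{R}^n$ associated to $\varphi$, and consider the graph
$$
M = \left\{ \left( x, \tfrac{1}{2}\langle Ax, x\rangle \right) : x \in \mathbb{R}^n \right\} \subset \mathbb{R}^{n+1},
$$
with $P = 0$ and $g$ the Riemannian metric induced from $\mathbb{R}^{n+1}$. Since $P$ is a critical point of the defining function, the induced metric at $P$ restricts to the standard inner product on $T_PM \cong \mathbb{R}^n$, and the second fundamental form at $P$ equals the Hessian of $\tfrac{1}{2}\langle Ax, x\rangle$ at the origin, which is precisely $\varphi$. The Gauss equation for a hypersurface in Euclidean space then gives $R_P = R_\varphi$, so the set of sectional curvatures of $(M, g)$ at $P$ coincides with that of $R_\varphi$, namely $[a, b]$.

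The only nontrivial work is the combinatorial exercise of Step~(a); once a valid eigenvalue list is in hand, Theorem~\ref{sec} and the Gauss equation finish the argument without further effort, and I do not anticipate a serious obstacle beyond the bookkeeping of sign cases.
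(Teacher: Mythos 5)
Your proof is correct and takes essentially the same approach as the paper: choose eigenvalues whose pairwise products have minimum $a$ and maximum $b$, let $\varphi$ be diagonal with those eigenvalues so Theorem~\ref{sec} gives sectional curvature set $[a,b]$, then realize $R_\varphi$ at a point of a hypersurface. The only difference is that the paper simply cites Gilkey \cite{G01} (p.~74) for the geometric realization, whereas you spell out the graph-of-a-quadratic construction and Gauss equation explicitly, and you also give explicit eigenvalue lists for each sign case rather than just asserting such a collection exists.
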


\begin{proof}
Choose any collection of real numbers $\lambda_1, \ldots, \lambda_n \in \mathbb{R}$ (not necessarily distinct) whose minimal and maximal pairwise products are, respectively, $a$ and $b$.  Define the symmetric bilinear form $\varphi$ so that $\lambda_1, \ldots, \lambda_n$ are its eigenvalues.  One may now carry out the geometric realization procedure as outlined on page 74 of \cite{G01} to produce the desired result.
\end{proof}

Before proving our next corollary, we note that $R_{c\varphi} = c^2 R_\varphi$, and so any linear combination of canonical algebraic curvature tensors
$$
\sum \alpha_i R_{\varphi_i} = \sum \epsilon_i R_{\tilde \varphi_i},
$$
where $\epsilon_i = \pm1 = {\rm sign}(\alpha_i),$ and $\tilde \varphi_i = \sqrt{|\alpha_i|} \varphi_i$.

The following corollary finds bounds on the sectional curvature values of any algebraic curvature tensor and is a direct consequence of Theorem \ref{sec}.

\begin{corollary}  \label{cor2}
Let $R \in \mathcal{A}(V)$, and express $R = \sum_{i = 1}^{\nu(R)} \epsilon_i R_{\varphi_i}$, where $\epsilon_i = \pm 1$ and $\varphi_i \in S^2(V^*)$.      Let $M_i$ and $m_i$ be, respectively, the maximal and minimal pairwise products of the eigenvalues of $\varphi_i$, and let $M = \sum \epsilon_i M_i$ and $m = \sum \epsilon_i m_i$.  The set of possible sectional curvature values of the model space $(V, \langle \cdot, \cdot \rangle, R)$ is a closed subinterval of $[m,M]$. 
\end{corollary}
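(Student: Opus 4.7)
The plan is to reduce to Theorem \ref{sec} termwise and then use topology of the Grassmannian to upgrade the pointwise bound into a statement about an interval. Fix any $\pi \in Gr_2(V)$ with orthonormal basis $\{x, y\}$. Since $R = \sum_{i=1}^{\nu(R)} \epsilon_i R_{\varphi_i}$, linearity of the curvature tensor in its arguments gives
$$\kappa(\pi) \;=\; R(x,y,y,x) \;=\; \sum_{i=1}^{\nu(R)} \epsilon_i\, R_{\varphi_i}(x,y,y,x) \;=\; \sum_{i=1}^{\nu(R)} \epsilon_i\, \kappa_i(\pi),$$
where $\kappa_i$ denotes the sectional curvature function associated with the model space $(V,\langle\cdot,\cdot\rangle, R_{\varphi_i})$.

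Applying Theorem \ref{sec} to each $R_{\varphi_i}$ yields $\kappa_i(\pi) \in [m_i, M_i]$ for every $\pi$, hence $\epsilon_i \kappa_i(\pi)$ lies between $\epsilon_i m_i$ and $\epsilon_i M_i$. Summing in $i$ then confines $\kappa(\pi)$ to the interval with endpoints $m = \sum \epsilon_i m_i$ and $M = \sum \epsilon_i M_i$, giving the containment of the image of $\kappa$ in $[m,M]$. To promote this set-theoretic containment to the claim that the image is a \emph{closed interval} contained in $[m,M]$, I would invoke the facts already noted in the introduction that $Gr_2(V)$ is compact and connected, together with the obvious continuity of $\pi \mapsto \kappa(\pi)$. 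The continuous image of a compact connected space in $\mathbb{R}$ is a closed bounded interval, which closes the argument.

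There is no serious technical obstacle here; the corollary is essentially immediate once Theorem \ref{sec} is in hand. The point worth highlighting in the write-up, and the reason the bound cannot be sharp in general, is that each $\kappa_i$ may attain its extremes on entirely different 2-planes, so there is typically no single $\pi$ on which the signed sum $\sum \epsilon_i \kappa_i(\pi)$ realizes $M$ (or $m$). This is exactly the phenomenon the authors flag in Remark \ref{notsharp}, and the only place where care is needed when presenting the result.
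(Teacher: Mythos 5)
Your overall approach matches the paper's intent: the paper offers no written proof, only the remark that Corollary \ref{cor2} ``is a direct consequence of Theorem \ref{sec},'' and the termwise decomposition $\kappa(\pi)=\sum\epsilon_i\kappa_i(\pi)$ followed by the compactness/connectedness of $Gr_2(V)$ is exactly the argument one would supply. The topological half of your argument is correct and complete.

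However, the interval-arithmetic step has a genuine sign error. You write that each $\epsilon_i\kappa_i(\pi)$ ``lies between $\epsilon_i m_i$ and $\epsilon_i M_i$,'' which is true, but you then conclude that the sum lies in the interval with endpoints $\sum\epsilon_i m_i$ and $\sum\epsilon_i M_i$. This does not follow: when you sum intervals you must pair lower endpoints with lower endpoints and upper with upper. If $\epsilon_i=-1$, the \emph{lower} endpoint of the range of $\epsilon_i\kappa_i$ is $\epsilon_i M_i = -M_i$, not $\epsilon_i m_i = -m_i$. So the correct Minkowski-sum bound is
\begin{equation*}
\sum_{i}\min(\epsilon_i m_i,\epsilon_i M_i)\ \le\ \kappa(\pi)\ \le\ \sum_{i}\max(\epsilon_i m_i,\epsilon_i M_i),
\end{equation*}
which coincides with $[\,\sum\epsilon_i m_i,\ \sum\epsilon_i M_i\,]$ only when every $\epsilon_i=+1$ (the case illustrated in Remark \ref{notsharp}). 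As a sanity check on your claimed interval: if $\epsilon_i=-1$ for all $i$ then $M=\sum\epsilon_iM_i\le \sum\epsilon_im_i=m$, so the claimed interval $[m,M]$ is empty or degenerate even though $R=-\sum R_{\varphi_i}$ certainly has sectional curvatures. The fix is to sum the actual lower bounds of the individual ranges and the actual upper bounds of the individual ranges, as above. (This appears to be a defect in the statement of Corollary \ref{cor2} itself; your write-up reproduces rather than repairs it, so the gap should be flagged explicitly.)
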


\begin{remark} \label{notsharp}{\rm
We give an example which shows that the bounds presented in Corollary \ref{cor2} are not sharp.  On an orthornormal basis $\{f_1, f_2, f_3\}$, define  $\varphi_1, \varphi_2 \in S^2(V^*)$ by having the following nonzero entries:
$$
\varphi_1(f_1, f_1) = \varphi_1(f_2, f_2) = \varphi_2(f_1, f_1) = \varphi_2(f_3, f_3) = 1.
$$
According to Theorem \ref{sec}, the maximal sectional curvature of both $R_{\varphi_1}$ and $R_{\varphi_2}$ is 1, so Corollary \ref{cor2} estimates the maximal sectional curvature of $R_{\varphi_1} + R_{\varphi_2}$ to be 2.  A straightforward calculation shows, however, that all sectional curvatures of $R_{\varphi_1} + R_{\varphi_2}$ are strictly less than 2. \hfill $\qedbox$
}\end{remark}

\section{The Spectral Theorem}

In this section we give a relatively short proof of the well known Spectral Theorem after establishing a helpful lemma.

\begin{lemma} \label{helpful}
Suppose $\varphi$ is a symmetric bilinear form on an inner product space $V$  with $\dim(V) \geq 2$.  There exists an orthonormal basis $\{f_1, e_2, \ldots, e_n\}$ for $V$ so that $f_1$ is an eigenvector of $\varphi$. 
\end{lemma}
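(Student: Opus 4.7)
My plan is to produce a single eigenvector $f_1$ by a variational argument in the spirit of Lemma \ref{2space}, and then complete the basis by choosing any orthonormal basis of $f_1^{\perp}$ via Gram--Schmidt. Because the statement only asks for one eigenvector (with the remaining $e_i$ arbitrary), a full diagonalization is not needed here; that extra step is what Section 3 builds up to.

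The first step is compactness. The map $x \mapsto \varphi(x,x)$ is continuous on the unit sphere $S = \{x \in V : \langle x, x \rangle = 1\}$, which is compact. So this map attains its maximum at some unit vector $f_1 \in S$. (A minimum would work equally well.)

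The second step is to show $f_1$ is an eigenvector, using the same perturbation trick as in the proof of Lemma \ref{2space}. Let $v$ be any unit vector with $\langle f_1, v \rangle = 0$, and consider the curve $\gamma(\theta) = \cos\theta \, f_1 + \sin\theta \, v$, which stays on $S$. Expanding,
$$
\varphi(\gamma(\theta), \gamma(\theta)) = \cos^2\theta \, \varphi(f_1,f_1) + \sin(2\theta) \, \varphi(f_1,v) + \sin^2\theta \, \varphi(v,v).
$$
Since $\theta = 0$ is a critical point, differentiating and evaluating at $0$ forces $2\varphi(f_1, v) = 0$. As this holds for every unit $v \in f_1^{\perp}$, we conclude that $\varphi(f_1, w) = \varphi(f_1, f_1) \langle f_1, w \rangle$ for every $w \in V$, i.e., $f_1$ is an eigenvector with eigenvalue $\lambda_1 = \varphi(f_1, f_1)$. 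Then any orthonormal basis $\{e_2, \ldots, e_n\}$ of $f_1^{\perp}$ (obtained by Gram--Schmidt applied to an arbitrary basis of $f_1^{\perp}$) finishes the construction.

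The only delicate point is logical, not technical: since this lemma will feed into the proof of the Spectral Theorem, I must be careful that no step implicitly invokes that theorem. Both the compactness of $S$ and the Gram--Schmidt process are elementary and independent of the Spectral Theorem, so the argument is genuinely self-contained.
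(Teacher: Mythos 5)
Your proof is correct, but it takes a genuinely different route from the paper's, and the difference matters for the paper's stated aim. You maximize the Rayleigh quotient $x \mapsto \varphi(x,x)$ over the unit sphere and apply the perturbation trick directly to $\varphi$; this is the classical textbook argument for extracting one eigenvector of a symmetric form. The paper instead routes everything through the canonical algebraic curvature tensor $R_\varphi$: when $R_\varphi \neq 0$ it invokes Lemma \ref{2space} (extremizing sectional curvature over $Gr_2(V)$, a two-parameter optimization rather than your one-parameter optimization over the sphere), and when $R_\varphi = 0$ it runs a separate cofactor/minor argument on $\det(\varphi(f_j,f_i))$ to produce a kernel vector. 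Your version is shorter, needs no case split, and never mentions $R_\varphi$ at all, so in that sense it is cleaner. What the paper's version buys is precisely what the paper is advertising: a proof of the Spectral Theorem that genuinely flows through sectional-curvature extremality. If you substitute your argument for Lemma \ref{helpful}, the induction in Section 3 collapses to the standard Rayleigh-quotient proof of the Spectral Theorem, which would undercut the claim that curvature methods give a \emph{new} proof. So your argument is sound and self-contained, but it is not a faithful replacement for the paper's proof in spirit or in purpose.
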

\begin{proof}
Let $\varphi$ be given, and consider the algebraic curvature tensor $R_\varphi$.  We break the proof into cases:  either $R_\varphi \neq 0$ or $R_\varphi = 0$.

Suppose first that $R_\varphi \neq 0$.  Using Lemma \ref{2space}, we choose a 2-plane $\pi$ of extremal sectional curvature and find an orthonormal basis $\{f_1, e_2\}$ of eigenvectors for $\pi$, which we extend to the orthonormal basis $\{f_1, e_2, \ldots, e_n\}$ for $V$, establishing the result.

Now suppose $R_\varphi = 0$.  We claim that $0$ is an eigenvalue\footnote{In fact, it is known \cite{G07} that if $R_\varphi = 0$ then the rank of  $\varphi$ is less than or equal to 1, so that on a vector space of dimension 2 or more, there must be a nontrivial kernal of $\varphi$.  However, we wish to give a self-contained and alternative proof here as part of our effort to produce a new proof of the Spectral Theorem.} of $\varphi$.  In this case, any corresponding unit eigenvector may be chosen first as part of an orthonormal basis for $V$, which would again establish the result.  

Suppose to the contrary that $0$ is not an eigenvalue of $\varphi$.  Therefore, for any orthonormal basis  $\{f_1, \ldots, f_n\}$, the matrix with $\varphi(f_j, f_i)$ as its $(i,j)-$entry must have a nonzero determinant.  By expanding this determinant by cofactors, there must be at least one nonzero $2\times2$ minor.  Thus for some indices $i_1, i_2, j_1, j_2$,
$$
\begin{array}{r c l}
0 & \neq  & \varphi(f_{i_1}, f_{j_1}) \varphi(f_{i_2}, f_{j_2}) - \varphi(f_{i_1}, f_{j_2}) \varphi(f_{i_2}, f_{j_1}) \\
 & = & R_\varphi(f_{i_1}, f_{i_2}, f_{j_2}, f_{j_1}),
\end{array}
$$
which contradicts the assumption that $R_\varphi = 0$.
\end{proof}

With these short preliminary facts established, we can now give a new proof of The Spectral Theorem. 

\begin{proof}[\textbf{Proof of The Spectral Theorem}]  The result is trivial if $\dim(V) = 1$, so assume $\dim(V) \geq 2$.  By Lemma \ref{helpful}, there exists an orthonormal basis $\{f_1, e_2, \ldots, e_n\}$ for $V$ so that $f_1$ is an eigenvector of $\varphi$.  Now consider the vector space $W = f_1^\perp = {\rm span}\{e_2, \ldots, e_n\}$.  Since our inner product is positive definite, the restriction of the inner product to $W$ remains positive definite.  In addition, the restricted curvature tensor satisfies $(R_\varphi)|_W = R_{\varphi|_W}$.
   
 We first consider when $\dim(W) \geq 2$.  We again use Lemma \ref{helpful} to find an orthonormal basis $\{f_2, \tilde e_3, \ldots, \tilde e_n\}$ for $W$ so that $f_2$ is an eigenvector of $\varphi|_W$.  That is,  
   $$
  \varphi|_W(f_2, \tilde e_k) = \varphi(f_2, \tilde e_k) = 0.
  $$
 Since $f_1$ is an eigenvector of $\varphi$, we also have $\varphi(f_1, f_2) = 0$, so $f_2$ is also an eigenvector of $\varphi$.  We have a new orthonormal basis $\{f_1, f_2, \tilde e_3, \ldots, \tilde e_n\}$ for $V$, where both $f_1$ and $f_2$ are eigenvectors of $\varphi$.

Now consider $W_2 = {\rm span}\{f_1, f_2\}^\perp$.  If $\dim(W_2) > 1$, then repeat the process outlined above, and continue until there is an orthonormal basis $\{f_1, \ldots, f_{n-1}, \bar e_n\}$ for $V$ (this is the case above if $\dim(W) = 1$ above), and for any $i \neq j$ inclusively between $1$ and $n-1$ we have
 $$
 \varphi(f_i, f_j) = \varphi(f_i, \bar e_n) = 0.
 $$
 This demonstrates that $f_n = \bar e_n$ is also an eigenvector, completing the proof.   \end{proof}

\section{Acknowledgments}  The authors appreciate helpful discussions with I. Stavrov concerning this work.  This research was jointly funded by California State University, San Bernardino, and the NSF grant DMS-1758020.

\end{document}